\documentclass[12pt]{amsart}
\usepackage{amssymb}
\usepackage{verbatim}
\usepackage{enumerate}
\usepackage{amsthm}
\usepackage{url}

\title{Properness under closed forcing}
\date{}
\author{Yasuo Yoshinobu}
\thanks{The author was supported by Grant-in-Aid for Scientific Research (C) (No. 18K03394) from JSPS}
\address{\newline
Graduate School of Information Science\newline
Nagoya University\newline
Furo-cho, Chikusa-ku, Nagoya 464-8601\newline
JAPAN}
\email{yosinobu@i.nagoya-u.ac.jp}

\theoremstyle{definition}
\newtheorem{dfn}{Definition}

\newtheorem{thm}[dfn]{Theorem}
\newtheorem{lma}[dfn]{Lemma}

\newtheorem{clm}{(Claim)}

\newcommand{\force}{\Vdash}

%
%




\makeatletter
\renewcommand{\p@enumii}{}
\makeatother
\begin{document}
\maketitle              
\begin{abstract}
For every uncountable regular $\kappa$, we give two examples of proper posets which turn improper in some $\kappa$-closed forcing extension.
\end{abstract}
\section{Introduction}
In this paper we prove the following theorem:
\begin{thm}\label{thm:main}
Let $\kappa$ be an uncountable regular cardinal. Then there exists a $\kappa$-closed poset $\mathbb{Q}$ and a proper poset $\mathbb{P}$ such that
$$
  \force_{\mathbb{Q}}\text{\lq\lq$\check{\mathbb{P}}$ is not proper.\rq\rq}
$$
\end{thm}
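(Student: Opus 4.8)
The plan is to exploit the single structural asymmetry available to us. A $\kappa$-closed $\mathbb{Q}$ adds no new bounded subset of $\kappa$ (indeed no new element of $H_\kappa$), yet it does add a fresh subset of $\kappa$. Concretely I would take $\mathbb{Q}$ to be the $\kappa$-closed poset of partial functions $p\colon\kappa\rightharpoonup 2$ of size $<\kappa$, with generic $A\subseteq\kappa$. The decisive feature is that every initial segment $A\restriction\delta$ (for $\delta<\kappa$) lies in $\V$, while $A$ itself does not; thus $A$ is a \emph{fresh} cofinal branch through a ground-model (wide) tree such as $2^{<\kappa}$, a branch that $\V$ provably lacks. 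This is the only lever: since $\mathbb{Q}$ adds no new $\omega$-sequences, a countably closed $\mathbb{P}$ would remain countably closed in $\V[A]$ and hence remain proper, so $\mathbb{P}$ must be built proper \emph{without} being countably closed, and its loss of properness must be read off entirely from the new branch $b_A=\langle A\restriction\delta:\delta<\kappa\rangle$.

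I would then design $\mathbb{P}$ to add, by countable approximations, a continuous $\subseteq$-increasing cofinal chain $\langle M_i:i<\omega_1\rangle$ in $[\kappa]^{\aleph_0}$ — equivalently a club guide in $[\kappa]^{\aleph_0}$ — subject to the requirement that the chain does not ``align with'' any cofinal branch of the ground-model tree. In $\V$ there is no such branch to align with, so the requirement is vacuous on a club of approximations, and properness of $\mathbb{P}$ in $\V$ would be proved by the usual master-condition argument: given a countable $M\prec H_\theta$ with $\mathbb{P}\in M$ and $p\in\mathbb{P}\cap M$, one threads an $M$-generic descending sequence through the dense sets of $M$ and checks that, because $\delta_M:=\sup(M\cap\kappa)$ is an admissible closure point, the limit is a genuine condition below $p$ that is $(M,\mathbb{P})$-generic. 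Here the absence of a cofinal branch in $\V$ is exactly what guarantees the limit can always be formed.

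The destruction step is where $b_A$ enters. In $\V[A]$ I would set $S_A$ to be the collection of countable $M$ (with $A\in M$) at which the new branch forces a divergence of the chain at $\delta_M$ — say, the $M$ for which $A\restriction\delta_M$ forbids the value the generic chain would have to take at $\delta_M$. A genericity computation for $\mathbb{Q}$ should show $S_A$ is stationary in $\V[A]$: because each bounded segment $A\restriction\delta$ is a free choice, one can, below any condition and against any club name, steer $A$ so as to create an obstruction at $\delta_M$ for suitable $M$. Crucially $S_A$ is fresh — it is defined from $A\notin\V$, so it is not even a set of $\V$ and the preservation theorems for stationarity under $\kappa$-closed forcing say nothing about it. Finally $\mathbb{P}$ forces its generic club to be disjoint from $S_A$, since the generic chain can thread only models off $S_A$; thus $S_A$ is a $\V[A]$-stationary subset of $[\kappa]^{\aleph_0}$ that $\mathbb{P}$ renders nonstationary, whence $\force_{\mathbb{Q}}$``$\check{\mathbb{P}}$ is not proper.''

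The main obstacle I anticipate is making the single poset $\mathbb{P}$ simultaneously proper over $\V$, where the branch is absent, and improper over $\V[A]$, where it is present: one must arrange the definitions of $\mathbb{P}$ and of $S_A$ to interlock so that the very master-condition construction witnessing properness in $\V$ \emph{provably} breaks in $\V[A]$, and breaks on a stationary set. This requires the ``no alignment with a branch'' constraint to be permissive enough that in $\V$ (no branch) continuation is always possible, yet rigid enough that in $\V[A]$ the fresh branch $b_A$ blocks continuation at stationarily many $M$. Checking that $S_A$'s stationarity genuinely survives and that $\mathbb{P}$ truly clubs off it — rather than $\mathbb{Q}$ accidentally preserving enough of the construction to keep $\mathbb{P}$ proper — is the technical heart of the argument.
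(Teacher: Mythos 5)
Your opening diagnosis is sound: the only lever a $\kappa$-closed $\mathbb{Q}$ gives you is a fresh unbounded object (a new branch through $2^{<\kappa}$, or a new club in $[\kappa]^\omega$), and $\mathbb{P}$ must be proper but not $\sigma$-closed, with its properness witnessed by a construction that the fresh object obstructs at stationarily many models. That is exactly the shape of both of the paper's examples. But the proposal stops at the shape, and the missing step is the one you yourself flag as ``the technical heart.'' The central problem is that $\check{\mathbb{P}}$ in $\V[A]$ is literally the same set of conditions with the same ordering; a ``no alignment with any cofinal branch'' clause in the definition of $\mathbb{P}$ is evaluated in $\V$, where you say it is vacuous, and the fresh branch $b_A$ cannot retroactively delete conditions or reorder them. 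So if the limit-stage requirement is vacuous in $\V$, then $\mathbb{P}$ is just the poset of countable continuous chains and nothing about it changes in $\V[A]$: it stays proper. The requirement must already bite in $\V$, per model $M$, in a way that (i) is permissive enough ($M$-stationary) for the master-condition argument to succeed in $\V$, and (ii) is \emph{provably} incompatible with the generic object for stationarily many $M$ in the extension. The paper engineers this by a diagonalization your plan lacks: for each countable $M\prec H(\theta)$ it fixes a totally $(M,\mathbb{Q})$-generic condition $p_M$ and, using a combinatorial lemma that splits each club $C\in M$ of $[\kappa]^\omega$ into an Ellentuck-closed part and a weakly unbounded remainder, chooses an $M$-stationary Ellentuck-open $\Sigma(M)$ with $p_M\force_{\mathbb{Q}}\Sigma(M)\cap\dot{C}=\emptyset$. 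Moore's poset $\mathcal{P}_\Sigma$ then forces conditions to settle into $\Sigma(q(\gamma))$ at limits, and this is what collides with the density requirement that the generic chain meet the new club unboundedly often.

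Two further steps you assert are genuinely nontrivial and unaddressed. First, stationarity of your $S_A$: in the paper this is a separate lemma whose proof needs the $p_M$ to be \emph{totally} generic (so that $N[G]\cap H(\theta)^{\V}=N\cap H(\theta)^{\V}$) and only yields stationarity of $\{M\mid p_M\in G\}$ below a suitable condition $p$; since you never pin down $S_A$, there is nothing for your ``genericity computation'' to compute, and the claim that $\mathbb{P}$ ``clubs off'' $S_A$ is unsubstantiated --- $\mathbb{P}$ is defined in $\V$ and cannot anticipate a set defined from $A\notin\V$. Second, properness of $\mathbb{P}$ in $\V$ is not ``the usual master-condition argument'' once the limit-stage side conditions are nonvacuous; it is Moore's total-properness theorem for $\mathcal{P}_\Sigma$, which the paper imports wholesale. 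Finally, your first paragraph already contains the seed of the paper's \emph{first}, much simpler example, which you then abandon: keep $\mathbb{Q}=2^{<\kappa}$ (as a tree under reverse inclusion), but let $\mathbb{P}$ add a Cohen real, collapse $2^\kappa$ to $\omega_1$, and then specialize the restriction of $2^{<\kappa}$ to a cofinal set of levels of order type $\omega_1$; a fresh branch through a special tree of height $\omega_1$ collapses $\omega_1$, so $\mathbb{Q}\times\mathbb{P}$ collapses $\omega_1$ and $\mathbb{P}$ is improper in $\V^{\mathbb{Q}}$. That is the concrete mechanism your branch idea is missing.
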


This theorem negatively answers the question raised by Karagila \cite{karagila_web}, which asks if there exists a sufficiently large cardinal $\kappa$ such that any proper poset remains proper after any $\kappa$-closed forcing. The motivation of this question is discussed in a recent article by Asper\'{o} and Karagila \cite{aspero-karagila}.

In fact, we give two examples witnessing the conclusion of Theorem \ref{thm:main}. The first example is a rather simple mimic of the known example of a pair of proper posets whose product is improper, given by Shelah \cite[XVII Observation 2.12, p.826]{shelah98:_proper_improper}. This example is briefly mentioned in \cite{aspero-karagila}. The second example involves the class of posets introduced by Moore \cite{moore05:_set} in connection with the Mapping Reflection Principle ($\mathrm{MRP}$) he introduced in the same paper. While the second example requires a longer argument, it has the advantage that $\mathbb{P}$ is taken to be totally proper. 
\section{The First Example}
Let $T$ be the complete binary tree of height $\kappa$. Let $\mathbb{Q}=T$ with the reversed ordering. Clearly $\mathbb{Q}$ is $\kappa$-closed. Let $\mathbb{P}_0=\mathrm{Add}(\omega)$ and $\dot{\mathbb{P}}_1=\mathrm{Col}(\omega_1, 2^\kappa)$ in $V^{\mathbb{P}_0}$. $\dot{\mathbb{P}}_1$ is $\sigma$-closed in $V^{\mathbb{P}_0}$. Note that, no new cofinal branches are added to $T$ by forcing with $\mathbb{P}_0*\dot{\mathbb{P}}_1$, by a well-known argument first proposed by Mitchell \cite{Aronszajn-transfer}. Let $\dot{C}$ be a $(\mathbb{P}_0*\dot{\mathbb{P}}_1)$-name for a cofinal subset of $\kappa$ of order type $\omega_1$, and let $T\upharpoonright\dot{C}$ denote the subset of $T$ consisting of all nodes of level in $\dot{C}$ (defined in $V^{\mathbb{P}_0*\dot{\mathbb{P}}_1}$). Note that, in $V^{\mathbb{P}_0*\dot{\mathbb{P}}_1}$, $T\upharpoonright\dot{C}$ forms a tree of size and height $\omega_1$, and since every cofinal branch through $T\upharpoonright\dot{C}$ generates one through $T$, which is in $V$, the number of cofinal branches through $T\upharpoonright\dot{C}$ is also $\omega_1$. Now let $\dot{\mathbb{P}}_2$ be a $(\mathbb{P}_0*\dot{\mathbb{P}}_1)$-name for the c.c.c. poset specializing $T\upharpoonright\dot{C}$, as described in Baumgartner \cite[\S 7]{baumgartner84:_applic_proper_forcin_axiom}, and set $\mathbb{P}:=\mathbb{P}_0*\dot{\mathbb{P}}_1*\dot{\mathbb{P}}_2$. $\mathbb{P}$ is a three step iteration of c.c.c., $\sigma$-closed and c.c.c. posets, and thus is proper. Note that, since $T\upharpoonright\dot{C}$ is specialized in $V^\mathbb{P}$, $\omega_1$ must be collapsed in any further extension where new cofinal branches through $T\upharpoonright\dot{C}$ are added. Since forcing with $\mathbb{Q}$ over $V$ adds a new cofinal branch through $T$, forcing with $\mathbb{Q}\times\mathbb{P}$ adds a cofinal branch through $T\upharpoonright\dot{C}$ which is not in $V^\mathbb{P}$. Therefore $\mathbb{Q}\times\mathbb{P}$ collapses $\omega_1$, and thus is improper. This shows that $\mathbb{P}$ is improper in $V^\mathbb{Q}$.
\section{The Second Example}
First let us review some relevant definitions and facts we will use in construction of our second example. Let us start with the notion of total properness.
\begin{dfn}
Let $\mathbb{P}$ be a poset, and $N$ a countable $\in$-model of a suitable fragment of $\mathrm{ZFC}$ which contains $\mathbb{P}$.
\begin{enumerate}[(1)]
  \item $p\in\mathbb{P}$ is said to be {\it $(N, \mathbb{P})$-generic\/} if for every dense subset $D\in N$ of $\mathbb{P}$, $D\cap N$ is predense below $p$.
  \item $p\in\mathbb{P}$ is said to be {\it totally $(N, \mathbb{P})$-generic\/} if for every dense subset $D\in N$ of $\mathbb{P}$, $p$ extends some element of $D\cap N$.
\end{enumerate}
\end{dfn}

Note that $\mathbb{P}$ is proper iff for every sufficiently large regular $\theta$, every countable $N\prec H(\theta)$ containing $\mathbb{P}$ and every $p\in N\cap\mathbb{P}$, there exists an $(N, \mathbb{P})$-generic condition of $\mathbb{P}$ which extends $p$.

\begin{dfn}
A poset $\mathbb{P}$ is said to be {\it totally proper\/} if for every sufficiently large regular $\theta$, every countable $N\prec H(\theta)$ containing $\mathbb{P}$ and every $p\in N\cap\mathbb{P}$, there exists a totally $(N, \mathbb{P})$-generic condition of $\mathbb{P}$ which extends $p$.
\end{dfn}

The notion of total properness has the following simple characterization.

\begin{thm}[Eisworth-Roitman\cite{eisworth_roitman99:_ostaszewski}]\label{thm:ER}
A poset $\mathbb{P}$ is totally proper iff $\mathbb{P}$ is proper and $\sigma$-Baire\footnote{$\sigma$-Baire posets are sometimes referred to as $\sigma$-distributive posets.}.\qed
\end{thm}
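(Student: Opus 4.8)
The plan is to prove both implications, using $\sigma$-Baire in its combinatorial form: the intersection of countably many dense open subsets of $\mathbb{P}$ is dense. The forward direction is the easy one. First I observe that every totally $(N,\mathbb{P})$-generic condition is $(N,\mathbb{P})$-generic: if $q$ extends some $d\in D\cap N$, then every extension of $q$ extends $d$ and is in particular compatible with $d$, so the single element $d$ witnesses that $D\cap N$ is predense below $q$. Hence total properness immediately yields properness. For $\sigma$-Baire, I would fix dense open sets $\langle D_n:n<\omega\rangle$ and an arbitrary $p$, choose a sufficiently large regular $\theta$ and a countable $N\prec H(\theta)$ with $\mathbb{P},\langle D_n:n<\omega\rangle,p\in N$, and apply total properness to obtain a totally $(N,\mathbb{P})$-generic $q\le p$. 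Since each $D_n$ lies in $N$, the condition $q$ extends some element of $D_n\cap N\subseteq D_n$, and because $D_n$ is open this gives $q\in D_n$; thus $q\in\bigcap_n D_n$ lies below $p$, proving density.

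The reverse direction is where the real work lies. Assume $\mathbb{P}$ is proper and $\sigma$-Baire, and fix $\theta$, a countable $N\prec H(\theta)$ and $p\in N\cap\mathbb{P}$. By properness I first fix an $(N,\mathbb{P})$-generic condition $q_0\le p$. Since $N$ is countable I may enumerate (in $V$, not necessarily in $N$) all dense subsets of $\mathbb{P}$ lying in $N$ as $\langle D_n:n<\omega\rangle$. The key device is to replace each $D_n$ by the set $E_n$ of all conditions extending some element of $D_n\cap N$, i.e.\ the downward closure of $D_n\cap N$. Each $E_n$ is open by construction, and I claim it is dense below $q_0$: given $s\le q_0$, the $(N,\mathbb{P})$-genericity of $q_0$ makes $D_n\cap N$ predense below $q_0$, so $s$ is compatible with some $d\in D_n\cap N$, and any common extension of $s$ and $d$ lies in $E_n$ below $s$. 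Now $\sigma$-Baireness applied below $q_0$ yields that $\bigcap_n E_n$ is dense below $q_0$, so I may pick $q\le q_0$ with $q\in\bigcap_n E_n$. By the definition of $E_n$, this single $q$ extends an element of $D_n\cap N$ for every $n$, hence is totally $(N,\mathbb{P})$-generic, and $q\le q_0\le p$ as required.

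I expect the main obstacle to be exactly the point where total genericity parts ways with ordinary genericity: properness guarantees only that $D_n\cap N$ is predense below $q_0$, whereas total properness demands a single condition sitting below an actual member of each $D_n\cap N$ simultaneously. Passing from ``predense below $q_0$'' to ``extends a member of $D_n\cap N$'' for one fixed $n$ is routine, but doing it for all $n$ at once is precisely what $\sigma$-Baireness buys, via density of the countable intersection $\bigcap_n E_n$. The only care needed is to verify that $\sigma$-distributivity may be invoked below the condition $q_0$ rather than in the whole poset, which is immediate since $\mathbb{P}\restrict q_0$ is again $\sigma$-Baire, and to record the standard equivalence (used tacitly when reformulating $\sigma$-Baire combinatorially) between $\sigma$-distributivity and density of countable intersections of dense open sets.
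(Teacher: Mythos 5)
Your proof is correct, and it follows exactly the route the paper indicates: the paper cites Eisworth--Roitman without proof and only remarks that the theorem follows from Lemma~\ref{lma:totalextend}, and your reverse direction (building the open sets $E_n$ as downward closures of the $D_n\cap N$ and intersecting them below an $(N,\mathbb{P})$-generic $q_0$) is precisely a proof of that lemma, combined with properness. The forward direction and the remark about relativizing $\sigma$-Baireness below $q_0$ are both handled correctly.
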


Theorem \ref{thm:ER} can be proved using the following lemma, which we will use later.

\begin{lma}\label{lma:totalextend}
Suppose $\mathbb{P}$ is $\sigma$-Baire and $N$ is countable. Then every $(N, \mathbb{P})$-generic condition can be extended to a totally $(N, \mathbb{P})$-generic condition.\qed
\end{lma}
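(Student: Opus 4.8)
The plan is to exploit the $\sigma$-Baire property to produce a single condition below $p$ that meets, for every relevant dense set, the trace of that dense set in $N$, thereby avoiding the construction of a descending $\omega$-chain, which in the absence of countable closure need not have a lower bound.

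First I would enumerate the dense subsets of $\P$ that belong to $N$. Since $N$ is countable, there are only countably many such sets, say $\langle D_n : n<\omega\rangle$. Fix an $(N,\P)$-generic condition $p$. For each $n$ let $E_n$ be the downward closure of $D_n\cap N$, that is, $E_n=\{r\in\P : r\leq d \text{ for some } d\in D_n\cap N\}$. Each $E_n$ is open (downward closed) by construction, and each $E_n$ is dense below $p$: given $r\leq p$, the genericity of $p$ makes $D_n\cap N$ predense below $p$, so $r$ is compatible with some $d\in D_n\cap N$, and any common extension of $r$ and $d$ lies in $E_n$ and below $r$.

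Next I would convert each $E_n$ into a globally dense open subset of $\P$ so as to invoke $\sigma$-Baireness directly. Setting $F_n=E_n\cup\{r\in\P : r\perp p\}$, one checks that $F_n$ is dense open in all of $\P$: any condition incompatible with $p$ already lies in $F_n$, while any condition compatible with $p$ has a common extension with $p$, below which $E_n$ is dense. The $\sigma$-Baire property of $\P$ then gives that $\bigcap_{n<\omega}F_n$ is dense, so there is $q\leq p$ belonging to every $F_n$. Since $q\leq p$, it is compatible with $p$ and hence cannot lie in the $\{r : r\perp p\}$ part; thus $q\in E_n$ for every $n$, which says that $q$ extends some element of $D_n\cap N$ for each $n$. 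As the $D_n$ exhaust the dense subsets of $\P$ lying in $N$, this $q$ is totally $(N,\P)$-generic and extends $p$, as required.

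The main obstacle, and the precise reason the hypothesis is needed, is the passage from ``dense below $p$'' for each individual $E_n$ to a single condition lying in all of them simultaneously. Without countable closure one cannot merely build a descending sequence $p\geq p_0\geq p_1\geq\cdots$ with $p_n\in E_n$ and take a lower bound; the $\sigma$-Baire property is exactly the device that replaces this chain argument, delivering a point in the countable intersection of dense open sets outright. The only remaining care is the routine verification that $\sigma$-Baireness is usable below $p$, which the passage to the $F_n$ handles by reducing everything to density in $\P$ itself.
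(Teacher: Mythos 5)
Your argument is correct and is the standard proof of this lemma, which the paper states without proof: relativizing the traces $D_n\cap N$ below $p$, padding with the conditions incompatible with $p$ to get globally dense open sets $F_n$, and applying $\sigma$-Baireness to find $q\leq p$ in $\bigcap_n F_n$ is exactly the intended argument, and each verification (openness, density, and that $q\leq p$ forces $q\in E_n$) is carried out correctly. Nothing is missing.
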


Next let us review the class of posets first introduced by Moore \cite{moore05:_set}. They were originally designed to prove the Mapping Reflection Principle ($\mathrm{MRP}$) from the Proper Forcing Axiom ($\mathrm{PFA}$). In our second example, $\mathbb{P}$ will be taken from this class.

\begin{dfn}[Moore \cite{moore05:_set}]
Let $\theta$ be a regular uncountable cardinal, and $X\in H(\theta)$. For a club subset $\mathcal{E}$ of $\{M\in[H(\theta)]^\omega\mid M\prec H(\theta)\}$, a function $\Sigma:\mathcal{E}\to\mathcal{P}([X]^\omega)$ is said to be an {\it open stationary set mapping\/} if for every $M\in\mathcal{E}$,
\begin{enumerate}[(1)]
  \item $\Sigma(M)$ is $M$-stationary, that is, $\Sigma(M)\cap C\cap M\not=\emptyset$ for every club subset $C\in M$ of $[X]^\omega$, and
  \item $\Sigma(M)$ is open in the Ellentuck topology, that is, for every $x\in\Sigma(M)$ there exists a finite $a\subseteq x$ such that
           $$
             [a, x]:=\{y\in[x]^\omega\mid a\subseteq y\}\subseteq\Sigma(M).
           $$
\end{enumerate}

For an open stationary set mapping $\Sigma:\mathcal{E}\to\mathcal{P}([X]^\omega)$, the poset $\mathcal{P}_{\Sigma}$ is defined as follows: $\mathcal{P}_{\Sigma}$ consists of the functions of the form $q:\alpha+1\to\mathcal{E}$ for some $\alpha<\omega_1$ such that
\begin{enumerate}[(1)]
  \item $q(\gamma)\in q(\gamma+1)$ for every $\gamma<\alpha$,
  \item $q(\gamma)=\bigcup_{\xi<\gamma}q(\xi)$ for every limit $\gamma\leq\alpha$, and
  \item for every limit $\gamma\leq\alpha$, there exists $\nu<\gamma$ such that $q(\xi)\cap X\in\Sigma(q(\gamma))$ for all $\xi$ satisfying $\nu<\xi<\gamma$.
\end{enumerate}
$\mathcal{P}_{\Sigma}$ is ordered by initial segment.
\end{dfn}

The following lemma is the heart of Moore's proof of $\mathrm{MRP}$ from $\mathrm{PFA}$.

\begin{lma}[Moore \cite{moore05:_set}]\label{lma:totallyproper}
$\mathcal{P}_{\Sigma}$ is totally proper.\qed
\end{lma}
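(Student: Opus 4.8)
The plan is to verify the two hypotheses of Theorem~\ref{thm:ER}, namely that $\mathcal{P}_\Sigma$ is proper and $\sigma$-Baire, and then to invoke Lemma~\ref{lma:totalextend} to upgrade genericity to total genericity. Fix a sufficiently large regular $\lambda$, a countable $N\prec H(\lambda)$ with $\Sigma,\mathcal{P}_\Sigma,X,\theta\in N$, and $p\in N\cap\mathcal{P}_\Sigma$. Put $M:=N\cap H(\theta)$; since $\mathcal{E}\in N$ is club in $[H(\theta)]^\omega$, we have $M\in\mathcal{E}$, so $\Sigma(M)$ is defined, $M$-stationary, and open. The whole argument turns on producing a condition $q\le p$ of domain $\delta+1$ with $q(\delta)=M$: such a $q$ will be the $(N,\mathcal{P}_\Sigma)$-generic condition witnessing properness, while the analogous object built with $M$ replaced by the union of a generic chain will witness $\sigma$-Baireness.

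First I would build the part of $q$ below $M$. Extending $p$, I construct an $\in$-increasing sequence $\langle M_n\mid n<\omega\rangle$ of models from $\mathcal{E}\cap N$ with $\mathrm{top}(p)\in M_0$ and $\bigcup_n M_n=M$, arranged so that $M_n\cap X\in\Sigma(M)$ for every $n$. To choose $M_n$, note that the projection $\{M'\cap X\mid M'\in\mathcal{E},\ M_{n-1}\in M'\}$ (reading $M_{-1}$ as $\mathrm{top}(p)$) is a club $C\in M$ of $[X]^\omega$; since $\Sigma(M)$ is $M$-stationary, $\Sigma(M)\cap C\cap M\ne\emptyset$, and any witness $x$ lies in $M\subseteq N$, so by elementarity there is an actual $M_n\in\mathcal{E}\cap N$ with $M_{n-1}\in M_n$ and $M_n\cap X=x\in\Sigma(M)$; a bookkeeping clause forces $\bigcup_n M_n=M$. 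Setting $q:=p^\frown\langle M_0,M_1,\dots\rangle^\frown\langle M\rangle$, clauses (1) and (2) hold routinely, and clause (3) at the top node holds because \emph{every} trace $M_n\cap X$ already lies in $\Sigma(M)$.

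The crux, and the step I expect to be hardest, is verifying clause (3) for the auxiliary conditions that \emph{do} carry uncontrolled nodes: the amalgam built in the genericity argument (whose nodes below $M$ come from a copy reflected out of an arbitrary $r'\in D$) and the limits produced by the $\sigma$-Baire fusion (whose nodes are forced on us by descending into the given dense open sets). In both, nodes whose $X$-traces are not chosen inside $\Sigma(M)$ appear cofinally below the relevant top node, so $M$-stationarity alone cannot secure the tail demanded by clause (3). This is exactly where Ellentuck-openness of $\Sigma(M)$ is essential: having fixed a controlled node with trace $x\in\Sigma(M)$, openness yields a finite $a\subseteq x$ with $[a,x]\subseteq\Sigma(M)$, so \emph{any} node $M'$ with $a\subseteq M'\cap X\subseteq x$ automatically satisfies $M'\cap X\in\Sigma(M)$. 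I would therefore interleave so that each uncontrolled node lies, in the $\in$-chain, between two controlled nodes, with the finite witness of the upper one contained in the lower one's trace; the upper bound $M'\cap X\subseteq x$ is then free from $M'\in M_{n+1}$, and the lower bound from $M'\supseteq M_n$. Showing that $M$-stationarity and openness can be applied \emph{simultaneously} at cofinally many steps — producing controlled traces that carry a witness inside the previous trace — is the main technical obstacle.

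Finally I would establish the two global properties. For genericity I take $r\le q$, extend it to $r'\in D$, and reflect the part of $r'$ around the node $M$ back into $N$ by elementarity to obtain $d\in D\cap N$; amalgamating $d$ with $r'$ over $M$ yields a common extension, where clause (3) of the amalgam is checked by the sandwiching above, so $D\cap N$ is predense below $q$ and $\mathcal{P}_\Sigma$ is proper. For $\sigma$-Baireness I run the same fusion against countably many dense open sets with no reference to $N$, taking the top node to be the union $M^\ast=\bigcup_n M_n$ of a continuous $\in$-chain from $\mathcal{E}$; since $\mathcal{E}$ is club we have $M^\ast\in\mathcal{E}$, and $\Sigma(M^\ast)$ is again $M^\ast$-stationary and open, so the identical argument makes the limit a legal condition in the intersection, giving $\sigma$-Baireness. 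With both properties in hand, Lemma~\ref{lma:totalextend} extends the generic conditions just produced to totally $(N,\mathcal{P}_\Sigma)$-generic ones (equivalently, Theorem~\ref{thm:ER} applies), so $\mathcal{P}_\Sigma$ is totally proper.
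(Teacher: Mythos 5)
The paper offers no proof of this lemma at all: it is quoted from Moore's MRP paper with an immediate \qed. So there is nothing to compare your argument against except the mathematics itself, and on that score your properness half has a genuine gap. The problem is the genericity step. $\mathcal{P}_\Sigma$ is ordered by end-extension, so two conditions are compatible if and only if one is an initial segment of the other. Hence, for dense $D\in N$, ``$D\cap N$ is predense below $q$'' forces $q$ itself to end-extend some member of $D\cap N$: any $d\in D\cap N$ has domain below $\delta=N\cap\omega_1<\mathrm{dom}(q)$, so $d$ is compatible with some $r\le q$ only if $d=q\upharpoonright\mathrm{dom}(d)$. Your plan of taking $r\le q$, extending to $r'\in D$, reflecting to get $d\in D\cap N$, and ``amalgamating $d$ with $r'$ over $M$'' cannot work: $d$ and $r'$ will in general be incomparable and hence incompatible, and there is no amalgamation in this poset. (For the same reason, generic and totally generic coincide here for conditions of domain $>\delta$, so the detour through Theorem~\ref{thm:ER} and Lemma~\ref{lma:totalextend} buys nothing.) The correct shape of the argument is a fusion: enumerate the dense sets of $N$ as $\langle D_n\rangle_{n<\omega}$, build a decreasing sequence $p=p_0\ge p_1\ge\cdots$ inside $N$ with $p_{n+1}\in D_n$, and show that $\bigcup_n p_n$ topped with $M$ is a legal condition. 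Your chain $\langle M_n\rangle$ is built with no reference to the dense sets, so the resulting $q$ is a condition but there is no reason for it to be generic.

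The second issue is that the step you yourself flag as ``the main technical obstacle'' --- arranging that the openness witness $a$ for the next controlled trace already sits inside the previous trace --- is precisely the crux, and it is left unresolved; as stated it is not achievable, since $a$ depends on the trace $x$, which is chosen after the previous node. Moore's resolution is slightly different from your sandwich: at stage $n$, reflect (by $M$-stationarity) the club of $X$-traces of models containing $p_n$ and $D_n$ to obtain $x\in\Sigma(M)\cap M$ together with a finite $a\subseteq x$ such that $[a,x]\subseteq\Sigma(M)$; pick $Y\in\mathcal{E}\cap N$ with $Y\cap X=x$ and $p_n, D_n, a\in Y$; first extend $p_n$ by one node of $Y$ whose trace contains $a$; and only then extend into $D_n$ working entirely inside $Y$. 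Every node $Z$ added at stage $n$ then satisfies $a\subseteq Z\cap X\subseteq Y\cap X=x$, hence $Z\cap X\in[a,x]\subseteq\Sigma(M)$, and clause (3) at $\delta$ holds with $\nu=\mathrm{dom}(p_1)$. This single fusion yields a totally $(N,\mathcal{P}_\Sigma)$-generic extension of $p$ directly, giving properness and $\sigma$-Baireness in one stroke; your decomposition into two separate verifications is workable in principle, but each half needs this mechanism, and neither is supplied.
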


The following are density lemmata about $\mathcal{P}_{\Sigma}$, which we will use later. For $q\in\mathcal{P}_{\Sigma}$, we write $q(\mathrm{dom}(q)-1)$ as $\mathrm{last}(q)$.

\begin{lma}[Moore \cite{moore05:_set}]\label{lma:density}
Let $\Sigma:\mathcal{E}\to\mathcal{P}([X]^\omega)$ be an open stationary set mapping, where $\mathcal{E}$ is a club subset of $\{M\in[H(\theta)]^\omega\mid M\prec H(\theta)\}$ for an uncountable regular cardinal $\theta$. 
\begin{enumerate}[(1)]
  \item For every $\alpha<\omega_1$, $D_\alpha:=\{q\in\mathcal{P}_{\Sigma}\mid\alpha\in\mathrm{dom}(q)\}$ is dense in $\mathcal{P}_{\Sigma}$.\label{item:dalpha}
  \item For every $x\in H(\theta)$, $E_x:=\{q\in\mathcal{P}_{\Sigma}\mid x\in \mathrm{last}(q)\}$ is dense in $\mathcal{P}_{\Sigma}$.\label{item:ex}\qed
\end{enumerate}
\end{lma}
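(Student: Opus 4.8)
The plan is to treat the two sets separately: density of $E_x$ is a single successor extension, while density of $D_\alpha$ forces us through limit stages, which is where the real work lies.

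For $E_x$, let $q\in\mathcal{P}_{\Sigma}$, let $\beta$ be the largest element of $\dom(q)$, and put $M=\mathrm{last}(q)=q(\beta)$. Since $\mathcal{E}$ is club in $[H(\theta)]^\omega$ and the collection of countable $N\prec H(\theta)$ with $M\in N$ and $x\in N$ is also club, their intersection is nonempty; fix $M'\in\mathcal{E}$ with $M\in M'$ and $x\in M'$. Then $q':=q\cup\{\langle\beta+1,M'\rangle\}$ is a condition: condition (1) in the definition of $\mathcal{P}_{\Sigma}$ holds at $\beta$ because $q'(\beta)=M\in M'=q'(\beta+1)$, and conditions (2) and (3) are untouched, since adjoining a single successor creates no new limit point of the domain. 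As $x\in M'=\mathrm{last}(q')$ and $q'\leq q$, this shows $E_x$ is dense.

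For $D_\alpha$ I would argue by induction on $\alpha<\omega_1$, the point being to produce, below any given $q$, a condition whose domain reaches past $\alpha$. If $\alpha\in\dom(q)$ there is nothing to do. The successor step is free: the one-step extension used for $E_x$ (choosing any $M'\in\mathcal{E}$ with $\mathrm{last}(q)\in M'$) always increases the top index by one, so density of $D_{\alpha}$ yields density of $D_{\alpha+1}$. Thus the entire difficulty is concentrated at limit $\alpha$, where having $\alpha\in\dom(q')$ \emph{forces} us to actually close off the limit: we must set $q'(\alpha)=\bigcup_{\xi<\alpha}q'(\xi)$ and verify condition (3) at $\alpha$. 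The heart of the matter is therefore the following limit-closing step, which I would isolate and prove by recursion on the length. Given a condition $q$ and a limit ordinal $\lambda>\sup\dom(q)$, first fix a model $M\in\mathcal{E}$ with $q,\lambda\in M$ that is intended to serve as $q'(\lambda)$, and then fill the interval between $\dom(q)$ and $\lambda$ with a continuous $\in$-increasing chain of elements of $\mathcal{E}$, each an element of $M$, whose union is exactly $M$ (which forces condition (2) to hold at $\lambda$, and is arranged by absorbing all of $M$, using that $M$ is countable). Two things must be engineered: every intermediate limit of the chain must itself satisfy condition (3), which is handled by invoking the recursion with the relevant intermediate model in place of $M$; and the traces $q'(\xi)\cap X$ must eventually land in $\Sigma(M)$. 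The latter is exactly where both defining properties of an open stationary set mapping are used in tandem: the $M$-stationarity of $\Sigma(M)$ keeps supplying points $y\in\Sigma(M)\cap M$ below $M\cap X$ (apply it to clubs $C\in M$ demanding that $y$ absorb more and more of $M\cap X$), while the openness of $\Sigma(M)$ equips each such $y$ with a finite $a\subseteq y$ so that every countable $z$ with $a\subseteq z\subseteq y$ again lies in $\Sigma(M)$; one then steers the choice of the next model so that its trace falls inside such a neighbourhood $[a,y]\subseteq\Sigma(M)$ while still advancing the chain toward $M$. This is the same mechanism that underlies Moore's proof that $\mathcal{P}_{\Sigma}$ is totally proper (Lemma \ref{lma:totallyproper}).

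The main obstacle, as the previous paragraph indicates, is condition (3) at the limits. The difficulty is twofold: $q'(\lambda)$ is constrained to be the union of the chain, so the very set $\Sigma(q'(\lambda))$ into which the traces must fall is only determined once the whole chain has been built — which is why one commits to a target $M$ in advance and forces the union to equal it — and the nesting of limits, each imposing its own instance of condition (3) relative to its own value of $\Sigma$, is what makes the construction genuinely recursive rather than a single $\omega$-chain. Once this limit-closing step is in place, the remaining verifications (the successor extensions, conditions (1) and (2), and the bookkeeping guaranteeing that the union exhausts $M$) are routine.
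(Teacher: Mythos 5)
The paper states this lemma as a known result of Moore and gives no proof of its own (it is quoted with an immediate \qed), so there is nothing to compare against line by line; your argument is essentially a reconstruction of Moore's original proof. The treatment of $E_x$ and of the successor step for $D_\alpha$ is complete and correct, and your account of the limit step --- committing in advance to a target model $M\in\mathcal{E}$, driving a continuous $\in$-chain of elements of $M$ up to $M$ itself, and using $M$-stationarity together with Ellentuck-openness of $\Sigma(M)$ to secure condition (3), with intermediate limits handled by the recursion/induction hypothesis --- is exactly the mechanism of Moore's properness argument, correctly identified as the only nontrivial point.
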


We also use the following lemma in our construction.

\begin{lma}\label{lma1}
Let $\kappa$ be an uncountable regular cardinal.
For every club subset $C$ of $[\kappa]^\omega$, there exists a club subset $\tilde{C}$ of $C$ such that
\begin{enumerate}[(a)]
  \item $\tilde{C}$ is closed in the Ellentuck topology.\label{cond1}
  \item $C\setminus \tilde{C}$ is weakly unbounded, that is, for every $a\in[\kappa]^{<\omega}$ there exists $x\in C\setminus \tilde{C}$ such that $a\subseteq x$.\label{cond2}
\end{enumerate}
\end{lma}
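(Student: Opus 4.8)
The plan is to exploit the fact that closure points of finitary functions automatically form Ellentuck-closed clubs, and to realize the deletion by intersecting with a \emph{second} such club, chosen only after the first has been fixed. First I would reduce to the case $C=C_F$, where $C_F=\{x\in[\kappa]^\omega\mid F[[x]^{<\omega}]\subseteq x\}$ for some $F:[\kappa]^{<\omega}\to\kappa$. This reduction is legitimate: by the standard characterization of the club filter on $[\kappa]^\omega$ (Kueker/Menas), every club contains such a $C_F$, and if some $\tilde C\subseteq C_F$ satisfies (a) and (b) relative to $C_F$, then $\tilde C\subseteq C$ and $C\setminus\tilde C\supseteq C_F\setminus\tilde C$ is still weakly unbounded. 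The observation that makes $C_F$ the right object is that it is Ellentuck closed: if $x\notin C_F$, pick a finite $a\subseteq x$ with $F(a)\notin x$; then every $y$ with $a\subseteq y\subseteq x$ also omits $F(a)$, so $[a,x]\cap C_F=\emptyset$, i.e.\ the complement of $C_F$ is Ellentuck open.

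Next I would seek $\tilde C$ of the form $\tilde C=C_F\cap C_h$ for a second function $h:[\kappa]^{<\omega}\to\kappa$. Any such $\tilde C$ is a club (intersection of two clubs) and is Ellentuck closed (intersection of two Ellentuck-closed sets), so conditions (a) and $\tilde C\subseteq C$ come for free. This reduces the entire lemma to a single task: choose $h$ so that $C_F\setminus C_h$ is weakly unbounded, i.e.\ so that above every finite $a$ there is an $F$-closed set that fails to be $h$-closed.

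To build such an $h$ I would run a recursion of length $\kappa$ along an enumeration $\langle a_\xi : \xi<\kappa\rangle$ of $[\kappa]^{<\omega}$ (using $|[\kappa]^{<\omega}|=\kappa$). At stage $\xi$ pick a fresh ordinal $u_\xi<\kappa$ above every ordinal mentioned at earlier stages and above $\max a_\xi$, let $w_\xi$ be the $F$-closure of $a_\xi\cup\{u_\xi\}$ (a countable subset of $\kappa$, by regularity), and choose $t_\xi\in\kappa\setminus w_\xi$. Finally declare $h(\{u_\xi\})=t_\xi$, and set $h$ harmlessly elsewhere (e.g.\ $h(b)=\min b$ for nonempty $b$ not of this form). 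Since the $u_\xi$ are distinct, $h$ is well defined; and for each finite $a=a_\xi$ the set $w_\xi\in C_F$ contains $a$ yet fails $h$-closure, because $u_\xi\in w_\xi$ while $t_\xi=h(\{u_\xi\})\notin w_\xi$. Thus $w_\xi\in C_F\setminus C_h$ witnesses weak unboundedness above $a$, giving (b).

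The hard part is conceptual rather than computational. The obstruction to the naive approach is that one is tempted to delete some Ellentuck-open ``marker'' subset fixed in advance (for instance the $x$ whose least element is a successor), but $F$ can force any such local feature: it may place $0$ into every $x\in C$, or make every member of $C$ closed under successor, so no marker chosen before $F$ works for all $C$. The construction above circumvents this precisely by selecting the deleting function $h$ \emph{after} $F$ and by using fresh ordinals $u_\xi,t_\xi$, so that the deletion constraints can neither be pre-empted by $F$ nor interfere with one another. The only remaining verifications are routine and rest on $\kappa$ being regular and uncountable: the $F$-closures stay countable and all the relevant suprema stay below $\kappa$.
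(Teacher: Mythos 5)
Your proof is correct, but it takes a genuinely different route from the paper's at the key step. Both arguments begin identically: pass to the set of closure points of a finitary function contained in $C$ (your $C_F$, the paper's $C(f)$) and observe that such a set is automatically Ellentuck closed. The difference is in how the weakly unbounded set gets deleted. The paper simply removes the finitely generated points, setting $\tilde C:=C(f)\setminus\{\mathrm{cl}_f(a)\mid a\in[\kappa]^{<\omega}\}$; it then checks that a union of a strictly $\subseteq$-increasing $\omega$-chain from $C(f)$ cannot be finitely generated (so $\tilde C$ is still club), that each $\mathrm{cl}_f(a)$ meets $C(f)$ only in itself inside the neighborhood $[a,\mathrm{cl}_f(a)]$ (so removing these points creates no new Ellentuck accumulation points), and that $\mathrm{cl}_f(a)$ itself witnesses weak unboundedness above $a$. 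You instead intersect with a second closure-point club $C_h$, with $h$ engineered by a recursion of length $\kappa$ so that above every finite $a$ there is an $F$-closed set failing $h$-closure. Your version makes condition (a) and clubness completely automatic (an intersection of two Ellentuck-closed clubs is Ellentuck closed and club), at the price of a transfinite construction and a well-definedness check for $h$; the paper's version needs no recursion and produces a canonical $\tilde C$ from $f$ alone, but must verify the two closure properties by hand after the deletion. The two deleted sets are close cousins: your witnesses $w_\xi=\mathrm{cl}_F(a_\xi\cup\{u_\xi\})$ are precisely instances of the finitely generated points that the paper deletes wholesale. One minor caveat, shared by both arguments and easily repaired: one should arrange that the closures of finite sets involved are infinite (e.g.\ by folding a successor function into $F$, respectively $f$), so that the witnesses to weak unboundedness actually lie in $[\kappa]^\omega$.
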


\begin{proof}
Let $f:{}^{<\omega}\kappa\to\kappa$ be such that
$$
  C(f)=\{x\in[\kappa]^\omega\mid \text{$x$ is closed under $f$}\}\subseteq C.
$$
Let
$$
  \tilde{C}:=C(f)\setminus\{\mathrm{cl}_f(a)\mid a\in[\kappa]^{<\omega}\},
$$
where $\mathrm{cl}_f(a)$ denotes the closure of $a$ by $f$. Since every union of a strictly $\subseteq$-increasing $\omega$-sequence of elements of $C(f)$ is in $\tilde{C}$, it easily follows that $\tilde{C}$ is a club subset of $C$. If $x\in[\kappa]^\omega$ is an accumulation point of $\tilde{C}$ in Ellentuck topology, $x$ must be closed under $f$ and also cannot be finitely generated by $f$, and thus is in $\tilde{C}$. This shows (\ref{cond1}). Since $a\subseteq\mathrm{cl}_f(a)\in C\setminus \tilde{C}$ holds for every $a\in[\kappa]^{<\omega}$, we have (\ref{cond2}).\qed
\end{proof}

Now we start to describe our construction of the second example. Suppose a regular uncountable cardinal $\kappa$ is given.
Let $\mathcal{C}$ be the set of club subsets of $[\kappa]^\omega$, and let $\mathbb{Q}=Col(\kappa, |\mathcal{C}|)$. Clearly $\mathbb{Q}$ is $\kappa$-closed, and $|\mathcal{C}|=\kappa$ holds in $V^\mathbb{Q}$. Let $\dot{C}$ be the $\mathbb{Q}$-name for a diagonal intersection of the members of $\mathcal{C}$. $\dot{C}$ is a club subset of $[\kappa]^\omega$ in $V^\mathbb{Q}$.

Let $\theta$ be a sufficiently large regular cardinal, and let
$$
  \mathcal{E}=\{M\in[H(\theta)]^\omega\mid\mathbb{Q}, \dot{C}\in M\prec H(\theta)\}.
$$
Fix an arbitrary $M\in\mathcal{E}$. Pick a totally $(M, \mathbb{Q})$-generic condition $p_M\in\mathbb{Q}$. Let $\{C^M_n\mid n<\omega\}$ enumerate $M\cap\mathcal{C}$. Note that, for each $n<\omega$, we may assume that $\tilde{C}^M_n\in M\cap\mathcal{C}$, where $\tilde{C}^M_n$ is made from $C^M_n$ as in Lemma \ref{lma1}. By the definition of $\dot{C}$, for each $n<\omega$ there exists a $\mathbb{Q}$-name $\dot{\alpha}^M_n\in M$ (for an ordinal below $\kappa$) such that
\begin{equation}\label{eqn1}
\force_{\mathbb{Q}}\text{\lq\lq $\{x\in\dot{C}\mid\dot{\alpha}^M_n\in x\}\subseteq(\tilde{C}^M_n)\check{}$.\rq\rq}
\end{equation}
By (\ref{cond1}) and (\ref{cond2}) of Lemma \ref{lma1}, there exists a $\mathbb{Q}$-name $\dot{y}^M_n$ and $\dot{a}^M_n$ both in $M$ such that
\begin{equation}\label{eqn2}
\force_{\mathbb{Q}}\text{\lq\lq $\dot{\alpha}^M_n\in\dot{a}^M_n\in[\dot{y}^M_n]^{<\omega}\land\dot{y}^M_n\in (C^M_n\setminus\tilde{C}^M_n)\check{}\land [\dot{a}^M_n, \dot{y}^M_n]\cap(\tilde{C}^M_n)\check{}=\emptyset$,\rq\rq}
\end{equation}
By (\ref{eqn1}) and (\ref{eqn2}) we have
$$
  \force_{\mathbb{Q}}\text{\lq\lq $[\dot{a}^M_n, \dot{y}^M_n]\cap\dot{C}=\emptyset$.\rq\rq}
$$
By the total genericity of $p_M$, for each $n<\omega$ there exists $y^M_n\in C^M_n\cap M$ and $a^M_n\in[y^M_n]^{<\omega}$ such that
\begin{equation}\label{eqn4}
p_M\force_{\mathbb{Q}}\text{\lq\lq $[(a^M_n)\check{}, (y^M_n)\check{}\ ]\cap\dot{C}=\emptyset$.\rq\rq}
\end{equation}
Now let us set $\Sigma(M):=\bigcup_{n<\omega}[a^M_n, y^M_n]$. It is easy to check that $\Sigma$ is an open stationary set mapping ($\Sigma(M)$ is clearly open, and is $M$-stationary since $y^M_n\in\Sigma(M)\cap C^M_n\cap M$ for every $n<\omega$). By (\ref{eqn4}) we have
\begin{equation}\label{eqn5}
p_M\force_{\mathbb{Q}}\text{\lq\lq $(\Sigma(M))\check{}\cap\dot{C}=\emptyset$.\rq\rq}
\end{equation}
Now let $\mathbb{P}:=\mathcal{P}_\Sigma$. $\mathbb{P}$ is totally proper by Lemma \ref{lma:totallyproper}.

\begin{lma}\label{lma2}
There exists $p\in\mathbb{Q}$ such that
$$
  p\force_{\mathbb{Q}}\text{\lq\lq $\{M\in\check{\mathcal{E}}\mid p_M\in\dot{G}\}$ is stationary in $[({H(\theta)}^V)\check{}\ ]^\omega$, \rq\rq}
$$
where $\dot{G}$ is the canonical $\mathbb{Q}$-name for $\mathbb{Q}$-generic filter.
\end{lma}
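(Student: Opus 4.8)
The plan is to prove the lemma by reflection in the extension, after reducing the claim to a density statement in $V$. Since $\mathbb{Q}$ is $\kappa$-closed and $\kappa>\omega$, forcing with $\mathbb{Q}$ adds no new $\omega$-sequences; hence $[H(\theta)^V]^\omega$, the relation $M\prec H(\theta)^V$, and the set $\mathcal{E}$ are all absolute between $V$ and $V^{\mathbb{Q}}$, and $\mathcal{E}$ remains a club in $V^{\mathbb{Q}}$. Consequently it suffices to produce $p$ such that, for every $\mathbb{Q}$-name $\dot D$ for a club subset of $[H(\theta)^V]^\omega$ and every $q\le p$, there are $M\in\mathcal{E}$ and $r\le q$ with $r\le p_M$ (so that $r\force p_M\in\dot G$) and $p_M\force\check M\in\dot D$. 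Given such witnesses densely below $p$, the set $\{M\in\mathcal{E}\mid p_M\in\dot G\}$ is forced to meet every club, i.e.\ to be stationary.

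For the model $M$ I would reflect. Fixing $\dot D$ and $q$, I choose a countable $N\prec H(\Theta)$ (for $\Theta\gg\theta$) containing $\mathbb{Q},\dot C,\theta,\mathcal{E},\Sigma,\langle p_M\mid M\in\mathcal{E}\rangle,\dot D,q$, and put $M:=N\cap H(\theta)$, so $M\in\mathcal{E}$. The reason for using the \emph{totally} generic condition $p_M$ is that it forces $M$ to be closed under the club: for each tuple $\vec a\in M^{<\omega}$ the dense set of conditions deciding the value at $\vec a$ of a generating function of $\dot D$ lies in $M$, so by total genericity $p_M$ extends a condition of $M$ deciding that value to be an element of $M$. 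Hence $p_M\force\check M\in\dot D$, and this holds for \emph{every} reflected $M$; this is the routine half.

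The hard part is to arrange $r\le q,p_M$, that is, to make the \emph{prescribed} condition $p_M$ land in the generic below $q$. Writing $H_M:=\{s\in M\cap\mathbb{Q}\mid p_M\le s\}$ for the $M$-generic filter determined by $p_M$, total genericity already gives $p_M\in\dot G\Rightarrow\dot G\cap M=H_M$; and if each $p_M$ is taken minimal (so that for the collapse $\mathbb{Q}=\mathrm{Col}(\kappa,|\mathcal{C}|)$ one has $\dom(p_M)=\kappa\cap M$ and $p_M=\bigcup H_M$), then conversely $p_M\in\dot G\iff\dot G\cap M=H_M$. Since for $M\ni q$ one has $\dom(q)\subseteq\kappa\cap M=\dom(p_M)$, a common extension $r\le q,p_M$ exists exactly when $q\in H_M$, in which case $r=p_M$ works. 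I would therefore build the assignment $M\mapsto p_M$ together with $p$ by a simultaneous recursion, using $|\mathcal{C}|=\kappa$ and the $\kappa$-closure of $\mathbb{Q}$ for the bookkeeping, so as to guarantee that below $p$ every condition $q$ is covered by some reflected $M$ with $q\in H_M$.

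The main obstacle is precisely this coherence requirement. A single $p$ of size $<\kappa$ can pin the generic down on only non-stationarily many reflected models, so the stationarily many agreements $\dot G\cap M=H_M$ that are needed must be produced \emph{generically}: the crux is a $\Diamond$-style density argument showing that, below a suitably chosen $p$, it is dense to create a new agreement with $M$ inside any prescribed club, so that $F_{\dot G}\restriction(\kappa\cap M)$ is guessed by the pre-assigned $H_M$ on a stationary set. Verifying that the conditions $p_M$ can be chosen to be guessed by the $\mathbb{Q}$-generic in this way is where the structure of the Lévy collapse and the diagonal club $\dot C$ must be used essentially. Once this is established, the reduction of the first paragraph yields that $p$ forces $\{M\in\mathcal{E}\mid p_M\in\dot G\}$ to be stationary.
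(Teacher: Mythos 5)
Your reduction and your ``routine half'' are fine: for $M=N\cap H(\theta)$ with the dense sets derived from $\dot D$ lying in $M$, total $(M,\mathbb{Q})$-genericity of $p_M$ does give $p_M\force\check M\in\dot D$. But the proposal has a genuine gap exactly where you locate it yourself, namely the ``coherence requirement''. You need that, below your $p$, every $q$ is extended by some pre-assigned $p_M$ with $M$ lying in an arbitrarily prescribed club, and you only gesture at a ``$\Diamond$-style density argument'' without supplying it. Since the assignment $M\mapsto p_M$ is fixed in the ground model (and, in the paper's construction, is already committed to before this lemma, because $\Sigma(M)$ is defined from $p_M$), arranging that the generic filter ``guesses'' $p_M$ on a stationary set of $M$'s is a substantial unproved claim, not bookkeeping; nothing in your write-up establishes it, and it is the whole content of the lemma. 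There is also a small slip: for $\kappa>\omega_1$ a condition $q\in M$ need not satisfy $\dom(q)\subseteq\kappa\cap M$, since $\dom(q)$ may be uncountable.

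The paper avoids the obstacle entirely by arguing by contradiction. If no $p$ forces stationarity, then (after mixing) a single name $\dot D$ is forced by the trivial condition to be a club disjoint from $\{M\in\mathcal{E}\mid p_M\in\dot G\}$. Now take a countable $N\prec H(\lambda)$ with $\dot D\in N$, put $M:=N\cap H(\theta)$, and pass to any generic $G$ with $p_M\in G$: genericity of $p_M$ over $N$ gives $N[G]\cap (H(\theta))^V=M\in\dot D_G$, while $p_M\in G$ --- a contradiction. Because the relevant property of $\dot D$ is forced by the trivial condition, one is free to go directly below $p_M$; no compatibility of $p_M$ with an arbitrary $q$ is ever required, which is precisely the step your direct approach cannot supply. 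You should either switch to this contradiction argument or actually prove the guessing/partition statement on which your plan depends.
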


\begin{proof}
Suppose otherwise. Then there exists a $\mathbb{Q}$-name $\dot{D}$ such that
$$
  \force_{\mathbb{Q}}\text{\lq\lq $\dot{D}$ is a club subset of $\check{\mathcal{E}}$ and $\forall M\in\dot{D}[p_M\notin\dot{G}]$.\rq\rq}
$$
Let $\lambda$ be a sufficiently large regular cardinal, and let $N$ be a countable elementary submodel of $H(\lambda)$ such that $\dot{D}\in N$ and $M:=N\cap H(\theta)\in\mathcal{E}$. Now let $G$ be a $\mathbb{Q}$-generic filter over $V$ such that $p_M\in G$. Then by genericity of $p_M$ it holds that $N[G]\cap (H(\theta))^V=N\cap (H(\theta))^V=M$. On the other hand, since $\dot{D}\in N$ and $\dot{D}$ is forced to be a club, it holds that $N[G]\cap (H(\theta))^V\in\dot{D}_G$. Thus it follows that $M\in\dot{D}_G$, and $p_M\notin G$ should be the case. This is a contradiction.\qed
\end{proof}

Now let $p$ be as in Lemma \ref{lma2}. We will show that
\begin{equation}\label{eqn:improper}
  p\force_{\mathbb{Q}}\text{\lq\lq$\check{\mathbb{P}}$ is not proper.\rq\rq}
\end{equation}
Note that it is enough for our goal: we may rename $\mathbb{Q}$ below $p$ as $\mathbb{Q}$, or use the weak homogeneity of $\mathbb{Q}$.

To this end, let $G$ be a $\mathbb{Q}$-generic filter over $V$ such that $p\in G$. Work in $V[G]$. Let
$$
\mathcal{F}:=\{M\in\mathcal{E}\mid M\cap\kappa\in\dot{C}_G\}.
$$
Since $\dot{C}_G$ is a club subset of $[\kappa]^\omega$, $\mathcal{F}$ is a club subset of $\mathcal{E}\subseteq[(H(\theta))^V]^\omega$.

Now for each $\alpha<\omega_1$, let
$$
\mathcal{D}_\alpha:=\{q\in\mathbb{P}\mid\exists\xi\in\mathrm{dom}(q)[\xi\geq\alpha\land q(\xi)\in\mathcal{F}]\}.
$$
It is clear that $\mathcal{D}_\alpha$ is a dense subset of $\mathbb{P}$ (by Lemma \ref{lma:density}(\ref{item:dalpha}), and the unboundedness of $\mathcal{F}$; note that any $q\in\mathbb{P}$ can be extended by adding any member of $\mathcal{E}$ which contains $\mathrm{last}(q)$ as an element).

Now let $S:=\{M\in\mathcal{E}\mid p_M\in G\}$. By our choice of $p$, $S$ is a stationary subset of $[(H(\theta))^V]^\omega$. 

Now let $\lambda$ be a sufficiently large regular caridnal, and set
$$
\mathcal{S}:=\{N\in[H(\lambda)]^\omega\mid\mathbb{P}, (\mathcal{D}_\alpha)_{\alpha<\omega_1}\in N\prec H(\lambda)\land N\cap(H(\theta))^V\in S\}.
$$
Then $\mathcal{S}$ is stationary in $[H(\lambda)]^\omega$. Pick any $N\in\mathcal{S}$. Set $M:=N\cap(H(\theta))^V\in S$ and $\delta:=N\cap\omega_1=M\cap\omega_1$. The following claim is enough to show (\ref{eqn:improper}).

\begin{clm}
There are no $(N, \mathbb{P})$-generic conditions.
\end{clm}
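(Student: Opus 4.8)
The plan is to argue by contradiction: assuming some $(N,\mathbb{P})$-generic condition exists, I will produce a single point lying simultaneously in $\Sigma(M)$ and in $\dot{C}_G$, contradicting (\ref{eqn5}). Since $\mathbb{P}$ is totally proper, hence $\sigma$-Baire by Theorem \ref{thm:ER}, Lemma \ref{lma:totalextend} lets me extend any $(N,\mathbb{P})$-generic condition to a totally $(N,\mathbb{P})$-generic one. So I may assume toward a contradiction that there is a \emph{totally} $(N,\mathbb{P})$-generic $q\in\mathbb{P}$; this stronger form is what the argument will actually exploit, since I need $q$ to pass through prescribed conditions of $N$, not merely be compatible with them.

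First I would locate $\delta=N\cap\omega_1$ inside $\mathrm{dom}(q)$ and identify the value there. For each $\alpha<\delta$ the dense set $D_\alpha$ of Lemma \ref{lma:density}(\ref{item:dalpha}) lies in $N$, so $q$ extends some $r\in D_\alpha\cap N$; then $\alpha<\mathrm{dom}(r)\leq\mathrm{dom}(q)$ while $\mathrm{dom}(r)\in N\cap\omega_1=\delta$, which forces $\mathrm{dom}(q)>\alpha$ for every $\alpha<\delta$. As $\delta$ is a limit ordinal, this gives $\delta\in\mathrm{dom}(q)$, and clause (2) in the definition of $\mathcal{P}_\Sigma$ yields $q(\delta)=\bigcup_{\xi<\delta}q(\xi)$. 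The crucial identification is $q(\delta)=M$. For $M\subseteq q(\delta)$, I would use that for each $x\in M$ the dense set $E_x$ of Lemma \ref{lma:density}(\ref{item:ex}) belongs to $N$, so $q$ extends some $s\in E_x\cap N$ with $x\in\mathrm{last}(s)=q(\mathrm{dom}(s)-1)\subseteq q(\delta)$. For $q(\delta)\subseteq M$, note each $q(\xi)$ with $\xi<\delta$ equals $r(\xi)$ for a suitable $r\in D_\alpha\cap N$ with $\xi<\alpha<\delta$, so $q(\xi)\in N$; being a countable member of $N$ it satisfies $q(\xi)\subseteq N\cap H(\theta)=M$, and taking the union over $\xi<\delta$ gives $q(\delta)\subseteq M$.

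With $q(\delta)=M$ established, recall that $N\in\mathcal{S}$ guarantees $M\in S$, i.e.\ $p_M\in G$, so (\ref{eqn5}) gives $\Sigma(M)\cap\dot{C}_G=\emptyset$ in $V[G]$. On the one hand, clause (3) in the definition of $\mathcal{P}_\Sigma$, applied at the limit $\delta$ with $q(\delta)=M$, produces some $\nu<\delta$ with $q(\xi)\cap\kappa\in\Sigma(M)$ for all $\xi$ satisfying $\nu<\xi<\delta$. On the other hand, for each $\alpha<\delta$ the set $\mathcal{D}_\alpha$ lies in $N$ (as $(\mathcal{D}_\alpha)_{\alpha<\omega_1}\in N$) and is dense, so $q$ extends some $t\in\mathcal{D}_\alpha\cap N$, yielding $\xi$ with $\alpha\leq\xi<\delta$ and $q(\xi)=t(\xi)\in\mathcal{F}$, that is $q(\xi)\cap\kappa\in\dot{C}_G$. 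Choosing $\alpha$ with $\nu<\alpha<\delta$ and the associated $\xi\geq\alpha$, the point $q(\xi)\cap\kappa$ lies in both $\Sigma(M)$ and $\dot{C}_G$, contradicting their disjointness; hence no $(N,\mathbb{P})$-generic condition exists.

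The step I expect to be the main obstacle is the identification $q(\delta)=M$, which is precisely where total genericity is indispensable: mere $(N,\mathbb{P})$-genericity would only give predensity of $D_\alpha\cap N$ and $E_x\cap N$ below $q$, not that $q$ actually extends members of these sets, and without the latter I cannot pin down the range of $q$ below $\delta$. The remaining ingredients are routine bookkeeping: that $\delta$ is a countable limit ordinal, that countable elements of $N$ are subsets of $N$, and that the traces $q(\xi)\cap\kappa$ witnessing membership in $\mathcal{F}$ are cofinal in $\delta$, so that at least one of them falls into the tail interval $(\nu,\delta)$ controlled by clause (3).
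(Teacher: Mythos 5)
Your proof is correct and follows essentially the same route as the paper's: extend the hypothetical generic condition to a totally $(N,\mathbb{P})$-generic $q$ via Lemma \ref{lma:totalextend}, identify $q(\delta)=M$ using the dense sets $D_\alpha$ and $E_x$, and then play clause (3) of the definition of $\mathcal{P}_\Sigma$ (giving a tail of $\xi$ with $q(\xi)\cap\kappa\in\Sigma(M)$) against the density of the $\mathcal{D}_\alpha$ (giving cofinally many $\xi$ with $q(\xi)\cap\kappa\in\dot{C}_G$) to contradict (\ref{eqn5}). The one point you state slightly loosely --- invoking $\sigma$-Baireness of $\mathbb{P}$ as computed in $V$ rather than in $V[G]$, where Lemma \ref{lma:totalextend} is actually being applied --- is asserted equally briefly in the paper itself, so this is not a substantive divergence.
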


\begin{proof}
Suppose there is an $(N, \mathbb{P})$-generic condition. Since $\mathbb{P}$ remains $\sigma$-Baire in $V[G]$, by Lemma \ref{lma:totalextend} such a condition can be extended to a totally $(N, \mathbb{P})$-generic condition $q$. By the total genericity of $q$, Lemma \ref{lma:density}(\ref{item:dalpha})(\ref{item:ex}), the facts that the domain of every condition of $\mathbb{P}\cap N$ is less than $\delta$, and that $\mathcal{D}_\alpha\in N$ for every $\alpha<\delta$, we may assume that
\begin{enumerate}[\rm(a)]
  \item $\mathrm{dom}(q)=\delta+1$,\label{item:domq}
  \item $\mathrm{last}(q)=q(\delta)=M$ and\label{item:lastq}
  \item $\{\xi<\delta\mid q(\xi)\in\mathcal{F}\}$ is unbounded in $\delta$.\label{item:unbounded}
\end{enumerate}
By (\ref{item:unbounded}) we have
$$
  \text{$\{\xi<\delta\mid q(\xi)\cap\kappa\in\dot{C}_G\}$ is unbounded in $\delta$.}
$$
On the other hand, by (\ref{item:lastq}) there exists $\nu<\delta$ such that
$$
  \text{$q(\xi)\cap\kappa\in\Sigma(M)$ holds for all $\xi$ satisfying $\nu<\xi<\delta$.}
$$
But since $M\in S$, $p_M\in G$ holds, and thus by (\ref{eqn5}) we have $\Sigma(M)\cap\dot{C}_G=\emptyset$. This is a contradiction.\qed
\end{proof}

This finishes our construction.

Note that our second example is more delicate than the first one in the following sense: although the properness of $\mathbb{P}$ is destroyed by forcing over $\mathbb{Q}$, the product $\mathbb{Q}\times\mathbb{P}$ remains proper unlike the first example, because $\mathbb{P}$ is $\sigma$-Baire and therefore $\mathbb{Q}$ remains $\sigma$-closed by forcing over $\mathbb{P}$.

\bibliographystyle{plain}
\bibliography{locco}
\end{document}